\def\ve{\varepsilon}
\def\intl{\int\limits}
\def\mod{\,\text{\rm mod}\;}
\def\beq{\begin{equation}}
\def\eeq{\end{equation}}
\def\cite#1{{\rm [#1]}}
\def\wt{\widetilde }
\newtheorem{theorem}{Theorem}
\newtheorem{lemma}{Lemma}
\theoremstyle{definition}
\newtheorem*{defin}{Definition}
\newtheorem*{rema}{Remark}
\begin{document}

\numberwithin{equation}{section}
\title{On small gaps between primes and almost prime powers}

\author{J\'anos Pintz\thanks{Supported by OTKA Grants K72731, K67676 and ERC-AdG.228005.}
}

\date{}
\maketitle
\setcounter{section}{1}

1. In two subsequent works, joint with D. Goldston and C. Y. Y{\i}ld{\i}r{\i}m \cite{GPY1, GPY2} we showed that for the sequence $p_n$ of primes
\beq
\liminf_{n \to \infty} \frac{p_{n + 1} - p_n}{\log p_n} = 0,
\label{eq:1.1}
\eeq
and even
\beq
\liminf_{n \to \infty} \frac{p_{n + 1} - p_n}{(\log p_n)^{1/2}(\log\log p_n)^2} < \infty.
\label{eq:1.2}
\eeq
A crucial ingredient of the proof was the celebrated Bombieri--Vinogradov theorem, which asserts that $\vartheta = 1/2$ is an admissible level of distribution of primes, that is,
\beq
\sum_{q \leq N^\vartheta /\log^C\!\! N} \max_{\substack{a\\ (a,q) = 1}} \biggl|\sum_{\substack{p\equiv a\,(\!\mod q)\\ p\leq N}} 1 - \frac{li\, N}{\varphi(q)}\biggr| \ll_{A} \frac{N}{\log^A\!\! N}
\label{eq:1.3}
\eeq
holds with $\vartheta = 1/2$ for any $A > 0$, $C > C(A)$.
The method also yielded \cite{GPY1} that if $\vartheta > 1/2$ is an admissible level of distribution of primes then for any \emph{admissible} $k$-element set $\mathcal H = \{h_i\}^k_{i = 1}$ (that is, if $\mathcal H$ does not occupy all residue classes $\mod p$ for any prime $p$) the set $n + \mathcal H := \{n + h_i \}^k_{i = 1}$ contains at least two primes for infinitely many values of $n$ if $k \geq k_0(\vartheta)$.
Consequently we have infinitely many bounded gaps between primes, more precisely
\beq
\liminf_{n \to \infty} (p_{n + 1} - p_n) \leq C(\vartheta).
\label{eq:1.4}
\eeq
The strongest possible hypothesis on the uniform distribution of primes in arithmetic progressions, the Elliott--Halberstam \cite{EH} conjecture stating the admissibility of the level $\vartheta = 1$ (with $N/\log^C\!\! N$ replaced by $N^{1 - \ve}$ for any $\ve > 0$), or slightly weaker, even the assumption $\vartheta \geq 0.971$ implies gaps of size at most 16 infinitely often, in fact,
\beq
k_0(0.971) = 6, \qquad C(0.971) = 16.
\label{eq:1.5}
\eeq
If $\vartheta = 1/2 + \delta$ is near to $1/2$, that is, $\delta$ is a small positive number, one can take for $\delta \to 0^+$
\beq
k_0 \left(\frac12 + \delta\right) = \left( 2 \left\lceil \frac1{2\delta}\right\rceil + 1 \right)^2, \qquad C \left(\frac12 + \delta\right) \sim 2\delta^{-2} \log \frac1{\delta}.
\label{eq:1.6}
\eeq

This situation suggests that one might take some prime-like set $\mathcal P'$ just slightly more dense than the set $\mathcal P$ of primes, that is, for any $\ve > 0$ a set $\mathcal P_{\ve}$ such that
\beq
\mathcal P \subset \mathcal P'_\ve := \{b_n\}^\infty_{n = 1},\quad
\pi'_\ve(N) := \#\{n \leq N, \ n \in \mathcal P'\} < \pi(n)(1 + \ve)
\label{eq:1.7}
\eeq
which has bounded gaps infinitely often, that is,
\beq
\liminf_{n \to \infty} (b_{n + 1} - b_n) < \infty.
\label{eq:1.8}
\eeq

Of course adding $p + 1$ to the set $\mathcal P$ for infinitely many primes would trivially satisfy the requirements but we are looking for some arithmetically interesting set $\mathcal P'_\ve$ with some similarity to primes or prime powers.
(Adding just prime powers to $\mathcal P$ raises the number of elements just with a quantity $\sim 2N^{1/2} / \log N$ which is negligible compared to $\pi(N)$.)
One possibility is to add some numbers which are similar to prime powers.
To avoid confusion with almost primes we will introduce the following

\begin{defin}
For any $\ve \geq 0$ a natural number $n$ is called $\ve$-balanced if for any prime divisors $p, q$ of $n$ we have
\beq
\min(p, q) \geq \bigl(\max(p,q)\bigr)^{1 - \ve}.
\label{eq:1.9}
\eeq
\end{defin}

\begin{rema}
With this definition $0$-balanced numbers larger than~$1$ are exactly the primes and prime powers.
\end{rema}

Let us denote the set of $\ve$-balanced numbers by $\mathcal P_\ve$, the total number of prime divisors of~$n$ by $\Omega(n)$ and let
\beq
\mathcal P_{\ve, r} := \bigl\{n \in \mathcal P_\ve, \ \Omega(n) = r\bigr\}, \ \ \ \mathcal P_\ve := \bigcup^\infty_{r = 1} \mathcal P_{\ve, r}.
\label{eq:1.10}
\eeq
(In this way we can talk about almost prime-squares $(r = 2)$, almost prime-cubes $(r = 3)$ etc.)

To have an idea about the quantity
\beq
\pi_{\ve, r} (N) := \#\{ N \leq n < 2N;\ n \in \pi_{\ve, r} (N) \}
\label{eq:1.11}
\eeq
we remark that denoting by $P^-(n)$ and $P^+(n)$ the least, resp., the greatest prime factor of~$n$ we have obviously
\beq
n \in \pi_{\ve, r} (N) \Longrightarrow N^{(1 - \ve)/r} \leq  P^-(n) \leq P^+(n) \leq (2N)^{1/(r(1 - \ve))}.
\label{eq:1.12}
\eeq
Reversed, we have also clearly for $n \in [N, 2N)$,  $\Omega(n) = r$ by $(1 + \ve/2)(1 - \ve) \leq 1 - \ve/2$
\beq
N^{(1 - \ve/2)/r} \leq  P^-(n) \leq P^+(n) \leq N^{(1 + \ve/2)/r} \Longrightarrow n \in \pi_{\ve, r}(N).
\label{eq:1.13}
\eeq

In order to simplify the calculation of the density of the $\ve$-balanced numbers we will work with the smaller subsets of $\mathcal P_{\ve, r}$, defined by
\begin{align}
\label{eq:1.14}
\mathcal P^*_{r, \ve}(N) :={} & \Bigl\{N \leq n < 2N, \ \Omega(n) = r, \\
&\qquad N^{(1 - \ve/2)/r} \leq P^-(n) \leq P^+(n) \leq N^{(1 + \ve/2)/r}\Bigr\}.\notag
\end{align}

The prime number theorem implies with easy calculations that by
\begin{gather*}
a_1 := (1 - \ve/2)/r,\qquad a_2 := (1 + \ve/2)/r,\\
I := \bigl[N^{a_1}, N^{a_2}\bigr],\qquad
J(\bold u) := (N/u_1 \dots u_{r - 1}, 2N/u_1 \dots u_{r - 1}]
\end{gather*}
\begin{align}
\label{eq:1.15}
\pi^*_{r,\ve}(N)
:&= \#\bigl\{n \in \mathcal P^*_{r,\ve}(N) \bigr\} = \sum_{\substack{N \leq p_1 \dots p_r < 2N\\
p_i \in I}} 1 \sim \\
&\sim \intl_I \dots \intl_I \prod^{r = 1}_{i = 1} \frac1{\log u_i} \intl_{I \cap J(\bold u)} \frac1{\log t} du_1 \dots du_{r - 1} \, dt \sim \notag\\
&\sim \frac{N}{\log N} {\intl^{a_2}_{a_1} \dots
\intl^{a_2}_{a_1}} \frac{d\alpha_1 \dots d \alpha_{r - 1}}{\alpha_1 \dots \alpha_{r - 1} (1 - \alpha_1 - \dots - \alpha_{r - 1})} =: \frac{C_0(r, \ve)N}{\log N}. \notag
\end{align}
Here we have obviously for $\ve \to 0$
\beq
C_0(r, \ve) \leq \left(\frac{\ve}{r}\right)^{r - 1} \frac{r^r}{(1 - \ve/2)^r} = \frac{r\ve^{r - 1}}{(1 - \ve/2)^r}.
\label{eq:1.16}
\eeq

Since for $\ve < \ve_0$ we have $\mathcal P_{r,\ve}(N) \subset \mathcal P^*_{r,3\ve}(N)$ the above assertion shows that the number of $\ve$-balanced composite numbers (the counting function of $\mathcal P'_\ve \setminus \mathcal P$) is negligible compared to that of the primes, since even in total
\beq
\sum^\infty_{r = 2} C_0(r,\ve) < 3\ve \ \text{ if } \ \ve < c_0.
\label{eq:1.17}
\eeq

After this preparation we can formulate our result.

\begin{theorem}
\label{th:1}
Let $r = 2$ or $3$, $\ve > 0$.
Then the set of $\ve$-balanced numbers with either one or $r$ prime factors contains infinitely many bounded gaps, but has $(1 + O(\ve)) \pi(N)$ elements below~$N$.
\end{theorem}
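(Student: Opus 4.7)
The plan is to run the GPY higher-rank sieve on the enriched set $\mathcal P'_\ve:=\mathcal P\cup\mathcal P_{\ve,r}$. The density claim $|\mathcal P'_\ve\cap[1,N]|=(1+O(\ve))\pi(N)$ follows immediately from (1.16)--(1.17), so the content of the theorem is bounded gaps. Fix parameters $k$ and $\ell$ (to be chosen later) and an admissible $k$-tuple $\mathcal H=\{h_1,\dots,h_k\}$; set $R=N^{1/4-\delta}$, and write $\theta$ and $\theta_{\ve,r}$ for the $\log$-indicator functions of $\mathcal P$ and $\mathcal P_{\ve,r}$. With the usual GPY weights form
\[
S_1=\sum_{N\le n<2N}\Lambda_R(n;\mathcal H,\ell)^2,\quad S_2^{(i)}=\sum_{N\le n<2N}\Lambda_R^2\,\theta(n+h_i),\quad S_3^{(i)}=\sum_{N\le n<2N}\Lambda_R^2\,\theta_{\ve,r}(n+h_i).
\]
Showing $\sum_{i=1}^k(S_2^{(i)}+S_3^{(i)})>(\log 3N)S_1$ for infinitely many $N$ forces, at some such $n$, at least two of $n+h_i$ to lie in $\mathcal P'_\ve$, giving a gap bounded by the diameter of $\mathcal H$.

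The asymptotics of $S_1$ and $S_2^{(i)}$ are the familiar GPY ones, requiring only Bombieri--Vinogradov at level $1/2$; their ratio yields the quantity $k(2\ell+1)/[2(\ell+1)(k+2\ell+1)]$, whose supremum over $k,\ell$ is exactly $1$, the marginal failure of level $1/2$ alone. The novelty is the analogue for $S_3^{(i)}$. Every $n\in\mathcal P_{\ve,r}$ has all prime factors $\ge N^{(1-\ve/2)/r}$, so its local behaviour at small moduli coincides with that of a prime and the singular series calculation carries over unchanged. Granted a Bombieri--Vinogradov type estimate
\[
\sum_{q\le N^{1/2}/\log^C N}\max_{(a,q)=1}\biggl|\sum_{\substack{n\in\mathcal P_{\ve,r}\cap[N,2N)\\ n\equiv a\,(\!\mod q)}}1-\frac{|\mathcal P_{\ve,r}\cap[N,2N)|}{\varphi(q)}\biggr|\ll_A \frac{N}{\log^A N},
\]
one obtains $\sum_i S_3^{(i)}\sim C_0(r,\ve)\sum_i S_2^{(i)}$, multiplying the target ratio by $1+C_0(r,\ve)>1$. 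This supremum is strictly exceeded by finite $\ell\asymp C_0(r,\ve)^{-1}$ and $k\asymp C_0(r,\ve)^{-2}$, delivering bounded gaps in $\mathcal P'_\ve$.

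The technical core is the Bombieri--Vinogradov estimate for $\mathcal P_{\ve,r}$ displayed above. For $r=2$ the indicator of $\mathcal P_{\ve,2}$ is the convolution of two prime-indicators supported in the short range $[N^{(1-\ve/2)/2},N^{(1+\ve/2)/2}]$, a balanced bilinear (Type~II) sum; such sums are handled at level $1/2$, indeed beyond, by the dispersion method of Motohashi and Bombieri--Friedlander--Iwaniec. For $r=3$ one faces a balanced Type~III sum with three prime factors near $N^{1/3}$; Friedlander--Iwaniec-style Type~III arguments, resting on Weil/Deligne bounds for incomplete Kloosterman sums, deliver level $1/2$ provided the narrow support of each prime factor is preserved through the Cauchy--Schwarz steps.

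The hard part is the Type~III case: the dispersion argument must be arranged so that the short ranges $[N^{(1-\ve/2)/3},N^{(1+\ve/2)/3}]$ survive, and genuine square-root cancellation must be extracted from the resulting incomplete Kloosterman sums. The restriction to $r\in\{2,3\}$ in the theorem reflects precisely the fact that Type $r$ analogs for $r\ge 4$ lie beyond current technology at level $1/2$.
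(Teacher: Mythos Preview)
Your GPY skeleton is exactly the paper's: form $S=\sum_{n\sim N}\bigl(\sum_i\chi_{\wt{\mathcal P}}(n+h_i)-1\bigr)\Lambda_R(n;\mathcal H,\ell)^2$, feed in Lemma~1 and a $\wt{\mathcal P}$-analogue of Lemma~2, and observe that the extra factor $1+C_0(r,\ve)$ pushes the critical ratio $\dfrac{k(2\ell+1)}{2(\ell+1)(k+2\ell+1)}$ past~$1$ for $\ell\asymp C_0^{-1}$, $k\asymp C_0^{-2}$. The density bound via \eqref{eq:1.16}--\eqref{eq:1.17} is also as in the paper. So the architecture is right.

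Where you diverge from the paper is in the Bombieri--Vinogradov input, and here you have made the problem much harder than it is. The paper obtains \eqref{eq:2.1} for $r=2,3$ by a direct appeal to Pan's mean-value theorem \eqref{eq:2.2} (equivalently Motohashi's induction principle): write $n=p_1\cdots p_r\in\mathcal P^*_{r,\ve}(N)$ as $n=mp_r$ with $m=p_1\cdots p_{r-1}\le N^{(r-1)(1+\ve/2)/r}\le N^{1-\alpha}$ and $f(m)$ the indicator that $m$ is such a product; then \eqref{eq:2.2} gives level $1/2$ immediately. There is no need for dispersion at $r=2$ and certainly no need for Friedlander--Iwaniec Type~III / Kloosterman input at $r=3$; that machinery exists to push \emph{beyond} level $1/2$, whereas here level $1/2$ is all that is required and is available for every $r\ge2$ by the same one-line reduction to \eqref{eq:2.2}. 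Your paragraph describing ``the hard part is the Type~III case'' is therefore attacking a non-existent obstacle, and as written it is not a proof but a sketch of a much deeper argument that you do not actually carry out.

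This misreading also leads you to the wrong explanation for the restriction $r\in\{2,3\}$. It is \emph{not} that the level-$1/2$ estimate fails for $r\ge4$; as just noted, Pan's theorem covers all $r$. The genuine obstruction is property~(ii) in the paper: the GPY weight $\Lambda_R$ with $R=N^{1/4}/(\log N)^C$ requires that whenever $n+h_0\in\wt{\mathcal P}$ one has $P^-(n+h_0)>R$, so that $P_{\mathcal H}(n)$ and $P_{\mathcal H\setminus\{h_0\}}(n)$ share the same divisors below $R$. Since $P^-(n+h_0)\ge N^{(1-\ve/2)/r}$ for $n+h_0\in\mathcal P^*_{r,\ve}(N)$, this forces $(1-\ve/2)/r>1/4$, i.e.\ $r\le3$. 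For $r=4$ one would have to shrink $R$ below $N^{1/4}$, and then the gain $C_0(4,\ve)=O(\ve^3)$ is far too small to offset the loss in $\log R/\log N$. You actually state the lower bound $P^-(n)\ge N^{(1-\ve/2)/r}$ yourself, but you use it only for the singular-series computation and miss that it is the real source of the constraint on~$r$.
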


\bigskip
\setcounter{section}{2}
\setcounter{equation}{0}
2. We will actually prove a stronger result.

\begin{theorem}
\label{th:2}
Let $r = 2$ or $3$, $\ve > 0$ and let $\mathcal H$ be an arbitrary $k$-element admissible set of non-negative integers, $k > k_0(\ve)$.
Then the $k$-tuple $n + \mathcal H$ contains at least two $\ve$-balanced numbers with either one or $r$ prime factors for infinitely many values of~$n$. \end{theorem}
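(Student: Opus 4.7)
The plan is to carry out the Goldston--Pintz--Y{\i}ld{\i}r{\i}m machinery for the enlarged set $\mathcal{P} \cup \mathcal{P}^*_{r,\ve}$ in place of $\mathcal{P}$, using the small surplus of $\ve$-balanced $r$-almost-primes as a lever to push past the critical threshold that primes alone fail to cross at the Bombieri--Vinogradov level $\vartheta = 1/2$. Concretely, I would fix $R = N^{1/4-\eta}$ and form the GPY weighted sum
\[
\mathcal{S}(N) := \sum_{N < n \leq 2N} \Bigl(\sum_{i=1}^k \mathbf{1}_{\mathcal{P}\cup\mathcal{P}^*_{r,\ve}}(n+h_i) - 1\Bigr)\, \Lambda_R(n;\mathcal{H},\ell)^2,
\]
with $\Lambda_R(n;\mathcal{H},\ell) = \tfrac{1}{(k+\ell)!}\sum_{d \mid P_{\mathcal{H}}(n),\, d \leq R} \mu(d) \log^{k+\ell}(R/d)$ the standard Selberg-type divisor weight and $P_\mathcal{H}(n) = \prod_j (n + h_j)$. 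Non-negativity of $\Lambda_R^2$ makes a bound $\mathcal{S}(N) > 0$ for arbitrarily large $N$ equivalent to the conclusion.

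Decompose $\mathcal{S}(N) = (S_2^{\mathcal{P}} - S_1) + S_2^{\mathrm{bal}}$, where $S_1 := \sum_n \Lambda_R^2$, $S_2^{\mathcal{P}} := \sum_n \sum_i \mathbf{1}_{\mathcal{P}}(n+h_i)\Lambda_R^2$ and $S_2^{\mathrm{bal}} := \sum_n \sum_i \mathbf{1}_{\mathcal{P}^*_{r,\ve}}(n+h_i)\Lambda_R^2$; the first two are verbatim the GPY calculation (Selberg $\Lambda^2$ for $S_1$, Bombieri--Vinogradov at $\vartheta = 1/2$ for $S_2^{\mathcal{P}}$). For $S_2^{\mathrm{bal}}$ I would substitute $\mathbf{1}_{\mathcal{P}^*_{r,\ve}}(m) = \sum_{m = p_1\cdots p_r,\, p_j \in I} 1$ with $I = [N^{a_1}, N^{a_2}]$. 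Because every prime factor of $m \in \mathcal{P}^*_{r,\ve}(N)$ exceeds $N^{(1-\ve/2)/r} > R$, each sieve divisor $[d_1, d_2] \leq R^2$ is automatically coprime to $m$, so the local densities, singular series and combinatorial constants of the main term coincide with those in the prime case; only the overall scale changes from $1/\log N$ to $C_0(r,\ve)/\log N$ as in \eqref{eq:1.15}.

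The decisive new input is the Bombieri--Vinogradov-type estimate
\[
\sum_{q \leq N^{1/2-\delta}} \max_{(a,q)=1} \Bigl|\#\bigl\{n \in \mathcal{P}^*_{r,\ve}(N) : n \equiv a \pmod{q}\bigr\} - \frac{\pi^*_{r,\ve}(N)}{\varphi(q)}\Bigr| \ll_A \frac{N}{\log^A N}.
\]
The naive route (freeze $p_1,\dots,p_{r-1}$ and invoke BV for the last prime) only reaches moduli $q \leq N^{1/(2r)-\delta}$, short of the target by a factor $r$ in the exponent. Instead I would exploit the bilinear ($r=2$) or trilinear ($r=3$) structure and average the remainder over the $p_j$ and $q$ \emph{simultaneously}, in the spirit of the Motohashi--Fouvry--Iwaniec treatment of Bombieri--Vinogradov for multiplicative convolutions of prime-supported sequences in narrow ranges. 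This is the principal obstacle of the proof; the restriction $r \in \{2,3\}$ is precisely the range in which such convolution BV estimates close cleanly at level $\tfrac12$. Once this is in hand, the GPY critical inequality gains an extra positive summand of order $C_0(r,\ve)\cdot k \log R/\log N$ on the left; after optimizing $\ell$ exactly as in \cite{GPY1}, this gain renders the inequality strict as soon as $k \geq k_0(\ve)$, so $\mathcal{S}(N) \gg N (\log N)^{k+2\ell}$ for all large $N$ and the theorem follows.
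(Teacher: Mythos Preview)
Your outline matches the paper's argument almost exactly: form the GPY sum with the indicator of $\mathcal P\cup\mathcal P^*_{r,\ve}$, use that elements of $\mathcal P^*_{r,\ve}(N)$ have all prime factors exceeding $R$, invoke a convolution Bombieri--Vinogradov theorem to get the analogue of Lemma~\ref{lem:2} with the extra factor $1+C_0(r,\ve)$, and conclude by the standard choice $l=\lfloor\sqrt{k}/2\rfloor$.

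One point deserves correction. You call the level-$\tfrac12$ Bombieri--Vinogradov estimate for $\mathcal P^*_{r,\ve}$ the ``principal obstacle'' and say that $r\in\{2,3\}$ is ``precisely the range in which such convolution BV estimates close cleanly at level $\tfrac12$.'' In fact the convolution estimate is \emph{not} the bottleneck and holds for every fixed $r\ge 2$: Pan's mean-value theorem (quoted as \eqref{eq:2.2}) applies directly with $f(m)$ the indicator of $m=p_1\cdots p_{r-1}$, $p_j\in I$, since $m\le N^{(r-1)(1+\ve/2)/r}=N^{1-\alpha}$ with $\alpha>0$ for any $r$ and small $\ve$; Motohashi's induction principle does the same. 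So the paper simply cites this step rather than treating it as a difficulty. The genuine source of the restriction $r\le 3$ is the condition you yourself wrote down a few lines earlier, namely $P^-(m)\ge N^{(1-\ve/2)/r}>R\approx N^{1/4}$, which forces $(1-\ve/2)/r>1/4$ and hence $r\le 3$ for small $\ve$. For $r\ge 4$ the sieve divisors $d\le R$ can share prime factors with $n+h_0$, property~(ii) fails, and the whole main-term calculation for $S_2^{\mathrm{bal}}$ collapses. Apart from this misattribution of where the constraint bites, your plan is correct and is the paper's proof.
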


\begin{proof}
Similarly to the role of the Bombieri--Vinogradov theorem \eqref{eq:1.3} in the proof of \eqref{eq:1.1}--\eqref{eq:1.2} we need the analogous assertion for the $\ve$-balanced numbers in $\mathcal P^*_{r,\ve}(N)$ defined in \eqref{eq:1.14}.
\end{proof}

\begin{theorem}
\label{th:3}
We have for any $A > 0$ with $C > C(A)$
\beq
\label{eq:2.1}
\sum_{q\leq \sqrt{N}/\log^C \!\! N} \max_{\substack{a\\
(a,q) = 1}} \biggl| \sum_{\substack{n \equiv a(q)\\
n \in \mathcal P^*_{r,\ve}(N)}} 1 - \frac{C_0(r,\ve) li\, N}{\varphi(q)} \biggr| \ll_{A, r} \frac{N}{\log^A \!\! N}.
\eeq
\end{theorem}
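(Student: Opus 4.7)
The plan is to rewrite the left side of \eqref{eq:2.1} as a bilinear sum of primes in arithmetic progressions and then invoke the bilinear form version of the Bombieri--Vinogradov theorem (essentially the Type~II piece in the usual Vaughan-identity proof), which supplies exactly the level of distribution $\sqrt N$ required by~\eqref{eq:2.1}. Concretely, I write $n = m \cdot \ell$ with $\ell = p_r$ and $m = p_1 \cdots p_{r-1}$: for $r = 2$ one has $\alpha = \beta = \mathbf 1_{\mathbb P \cap I}$, each supported near $\sqrt N$; for $r = 3$ one takes $\beta = \mathbf 1_{\mathbb P \cap I}$ (supported near $N^{1/3}$) and $\alpha_m = \#\{(p_1,p_2) \in (\mathbb P \cap I)^2 : p_1 p_2 = m\}$ (supported near $N^{2/3}$, bounded by $d(m)$). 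Every $n \in \mathcal P^*_{r,\ve}(N)$ without a repeated prime factor is counted exactly $r$ times in $\sum_{m\ell = n} \alpha_m \beta_\ell$, and the diagonal contribution from $n$ with repetitions is $O(N^{(r-1)/r}/\log^{r-1} N)$, which is negligible after summing over~$q$.

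For the main term, \eqref{eq:1.15} gives $\pi^*_{r,\ve}(N) = C_0(r,\ve)\,\mathrm{li}\,N + O(N/\log^{A+1} N)$, so the candidate main term of \eqref{eq:2.1} agrees with $\pi^*_{r,\ve}(N)/\varphi(q)$ up to an admissible error. The natural main term produced by the bilinear estimate is $(r\varphi(q))^{-1}\sum_{m\ell \in [N,2N),\,(m\ell,q)=1} \alpha_m \beta_\ell$, differing from $\pi^*_{r,\ve}(N)/\varphi(q)$ only by the contribution of $n \in \mathcal P^*_{r,\ve}(N)$ sharing a prime factor with $q$. For $r = 2$ this is vacuous, since every prime of $n$ exceeds $\sqrt N /\log^C N \geq q$ for $N$ large; for $r = 3$, where the primes are $\approx N^{1/3} < q$, such $n$ contribute at most $O(\omega(q) N^{2/3}/(\varphi(q)\log^2 N))$ per $q$, which sums to $O(N^{2/3} \log\log N/\log N) = O(N/\log^A N)$.

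The heart of the argument is then an appeal to the standard bilinear Bombieri--Vinogradov theorem: for bounded arithmetic sequences $\alpha, \beta$ supported in dyadic intervals $[M, 2M]$, $[L, 2L]$ with $ML \asymp N$ and $\min(M, L) \geq N^\eta$, each satisfying a Siegel--Walfisz condition, one has
$$\sum_{q \leq \sqrt N/\log^C N}\max_{(a,q)=1} \biggl|\sum_{\substack{m\ell \equiv a\,(q)\\ m\ell \in [N,2N)}} \alpha_m \beta_\ell - \frac{1}{\varphi(q)}\sum_{\substack{m\ell \in [N,2N)\\ (m\ell,q)=1}} \alpha_m \beta_\ell \biggr| \ll_A \frac{N}{\log^A N}.$$
Siegel--Walfisz for $\beta = \mathbf 1_{\mathbb P \cap I}$ is classical; for $\alpha$ in the $r = 3$ case it is immediate from Siegel--Walfisz applied to each prime factor. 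The main obstacle I foresee is not the bilinear estimate itself (which follows from Cauchy--Schwarz and the large sieve exactly as in the original Bombieri--Vinogradov proof), but rather handling the sharp hyperbolic cutoff $m\ell \in [N, 2N)$ simultaneously with the short-interval constraint $p_i \in I$. This is routinely dealt with by a smooth dyadic decomposition of the ranges of $m$ and $\ell$, applying the bilinear estimate in each box and absorbing the $O(\log^2 N)$ boundary boxes where $m\ell$ straddles an endpoint via a Perron or partial-summation argument.
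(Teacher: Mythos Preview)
Your proposal is correct and follows essentially the same route as the paper. The paper's ``proof'' of Theorem~\ref{th:3} is in fact only a one-sentence reference: it says the result follows either by imitating Vaughan's proof of Bombieri--Vinogradov or by directly applying a generalized Bombieri--Vinogradov theorem of Motohashi or of Pan Cheng Dong, and then quotes Pan's statement~\eqref{eq:2.2}, which is precisely the bilinear estimate $\sum_{m\leq N^{1-\alpha}} f(m)\bigl(\sum_{mp\equiv a(q)}1 - \mathrm{li}(N/m)/\varphi(q)\bigr)$ with $f$ bounded. Your decomposition $n=m\cdot\ell$ with $\ell=p_r$ prime and $m=p_1\cdots p_{r-1}$, together with the bilinear (Type~II) Bombieri--Vinogradov input, is exactly an explicit unpacking of that citation; the additional bookkeeping you supply (multiplicity~$r$, the $O(N^{(r-1)/r})$ diagonal, the $(n,q)>1$ correction to the main term for $r=3$, and the smoothing of the hyperbolic cutoff) is precisely what is needed to turn the paper's reference into a genuine argument, and all of it is handled correctly.
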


The proof runs analogously to the proof of Vaughan \cite{Vau} of the Bombieri--Vinogradov theorem or one may apply some form of generalized Bombieri--Vinogradov type theorems, as that of Y. Motohashi \cite{Mot} or Pan Cheng Dong \cite{Pan}.
The latter asserts that for any $\alpha > 0$, $\ve > 0$ and any $f(m) \ll 1$ we have
\beq
\sum_{q \leq \sqrt{N}/\log^C\!\! N} \max_{\substack{a\\
(a,q) = 1}} \Biggl| \sum_{m\leq N^{1 - \alpha}} f(m) \biggl(\sum_{\substack{mp \leq N\\
mp \equiv a(\mod q)}} 1 - \frac{li\, \frac{N}{m}}{\varphi(q)} \biggr) \Biggr| \ll_{\alpha, A} \frac{N}{\log^A \!\! N}.
\label{eq:2.2}
\eeq

The work \cite{GPY1} was based on two main lemmas describing properties of the crucial weight function $\Bigl(\mathcal H = \{h_i\}^k_{i = 1}\Bigr)$
\beq
\Lambda_R(n; \mathcal H, l) = \frac1{(k + l)!} \sum_{d\mid P_{\mathcal H}(n), d \leq R} \mu(d) \log^{k + l} \frac{R}{d}, \ \ P_{\mathcal H}(n) := \prod^k_{i = 1} (n + h_i).
\label{eq:2.3}
\eeq
The formulation of the main lemmas need the singular series
\beq
\mathfrak S(\mathcal H) = \prod \left(1 - \frac{\nu_p(\mathcal H)}{p} \right) \left(1 - \frac1p\right)^{-k},
\label{eq:2.4}
\eeq
where $\nu_p(\mathcal H)$ denotes the number of residue classes occupied by $\mathcal H \mod p$, for any prime~$p$.
The admissible property of $\mathcal H$ means $\nu_p(\mathcal H) < p$ for any $p$, or equivalently $\mathfrak S(\mathcal H) \neq 0$.
The two main lemmas below are special cases of Propositions~1 and 2 of \cite{GPY1}.

In the following let $\eta > 0$, $k$, $l$ bounded, but arbitrarily large integers, $n \sim N$ substitutes $n \in [N, 2N)$
\beq
\max_{h_i \in \mathcal H} h_i \ll \log N, \quad R > N^{c_0}, \quad \chi_{\mathcal P}(n) = \begin{cases} 1 &\text{if }\ n \in \mathcal P,\\
0 &\text{if }\ n \notin \mathcal P.
\end{cases}
\label{eq:2.5}
\eeq

\begin{lemma}
\label{lem:1}
For $R \leq \sqrt{N}/(\log N)^C$, $N \to \infty$, we have
\beq
\sum_{n \sim N} \Lambda_R(n; \mathcal H, k + l)^2 = {2l\choose l} \frac{N(\log R)^{k + 2l} (\mathfrak S(\mathcal H) + o(1))}{(k + 2l)!}.
\label{eq:2.6}
\eeq
\end{lemma}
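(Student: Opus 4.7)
The natural plan is a Selberg-sieve second-moment computation. First I would square out $\Lambda_R$ and swap the order of summation to write
\[
\sum_{n \sim N}\Lambda_R(n;\mathcal H,k+l)^2
= \frac{1}{((2k+l)!)^2}\sum_{\substack{d_1,d_2\leq R}}\mu(d_1)\mu(d_2)\,L(d_1)L(d_2)\!\!\sum_{\substack{n\sim N\\ [d_1,d_2]\mid P_{\mathcal H}(n)}}\!\!1,
\]
where $L(d):=\log^{2k+l}(R/d)$. The inner sum, by a standard CRT argument, equals $N\,\nu_q(\mathcal H)/q+O(\nu_q(\mathcal H))$ with $q=[d_1,d_2]$ and $\nu_q$ the multiplicative function counting residues $n\pmod q$ with $q\mid P_{\mathcal H}(n)$; in particular $\nu_p(\mathcal H)$ agrees with the notation of \eqref{eq:2.4}. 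Since $d_1d_2\leq R^2\leq N/\log^{2C}N$ and $\nu_q(\mathcal H)\leq k^{\omega(q)}$, the error contributes $O(N(\log N)^{-A})$ for any $A$ by a crude bound, so we are reduced to the main term
\[
T_N := \frac{N}{((2k+l)!)^2}\sum_{d_1,d_2\leq R}\mu(d_1)\mu(d_2)L(d_1)L(d_2)\frac{\nu_{[d_1,d_2]}(\mathcal H)}{[d_1,d_2]}.
\]

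Next I would evaluate $T_N$ by contour integration. Writing $\log^{2k+l}(R/d)=\frac{(2k+l)!}{2\pi i}\int_{(c)}R^sd^{-s}s^{-(2k+l+1)}\,ds$, the $d_1,d_2$ sums assemble into a double Dirichlet series
\[
F(s_1,s_2):=\sum_{d_1,d_2}\frac{\mu(d_1)\mu(d_2)\nu_{[d_1,d_2]}(\mathcal H)}{[d_1,d_2]\,d_1^{s_1}d_2^{s_2}},
\]
which factors as an Euler product. Pulling out the zeta-factors, one has
\[
F(s_1,s_2)=\zeta(1+s_1+s_2)^{-k}\prod_p\!\Bigl(1+\text{lower order in }p^{-1}\Bigr)\;=\;\mathfrak S(\mathcal H)\,\frac{\zeta(1+s_1)^k\zeta(1+s_2)^k}{\zeta(1+s_1+s_2)^k}\,G(s_1,s_2),
\]
where $G$ is holomorphic and equals $1$ on the diagonal $s_1=s_2=0$ after cancellation against the local factors defining $\mathfrak S(\mathcal H)$. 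Shifting both contours to $\Re s_i=-\delta$ for small $\delta$, the only contribution of size $(\log R)^{k+2l}$ comes from the triple-pole structure at $s_1=s_2=0$; lower-order shifts are absorbed into the $o(1)$.

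The residue computation is the one combinatorial kernel of the argument. Replacing $\zeta(1+s)$ by $1/s$ plus analytic, the leading term of $T_N$ becomes
\[
\mathfrak S(\mathcal H)\,\frac{N}{((2k+l)!)^2}\cdot\frac{((2k+l)!)^2}{(2\pi i)^2}\!\iint\frac{R^{s_1+s_2}}{(s_1+s_2)^k}\,\frac{ds_1\,ds_2}{s_1^{k+l+1}s_2^{k+l+1}}\bigl(1+o(1)\bigr),
\]
and a standard change of variables $s_1+s_2=u$, $s_1=uv$ (or a direct beta-integral evaluation) yields $\int_0^1 v^{k+l}(1-v)^{k+l}\,dv\cdot(\log R)^{k+2l}/(k+2l)!$, which by the beta-integral identity produces the factor $\binom{2l}{l}/(k+2l)!$ after simplification. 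This gives \eqref{eq:2.6}. The main obstacle is exactly this residue/beta-integral bookkeeping: getting the correct combinatorial factor $\binom{2l}{l}$ requires careful tracking of orders of poles and the normalization of $G(0,0)=1$ coming from the definition \eqref{eq:2.4} of $\mathfrak S(\mathcal H)$; the arithmetic input (switching order of summation, the CRT count, and the truncation error) is entirely routine thanks to the hypothesis $R\leq\sqrt N/\log^C N$.
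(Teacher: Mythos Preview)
The paper does not actually prove this lemma; it simply records Lemmas~\ref{lem:1} and~\ref{lem:2} as special cases of Propositions~1 and~2 of \cite{GPY1}. Your outline is the GPY contour-integral argument, so in spirit the approaches agree, but several of your intermediate formulas are wrong and, as written, would not produce \eqref{eq:2.6}.

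The main slip is that your zeta factorization is inverted. The Euler factor of $F$ at $p$ is
\[
1-\frac{\nu_p(\mathcal H)}{p^{1+s_1}}-\frac{\nu_p(\mathcal H)}{p^{1+s_2}}+\frac{\nu_p(\mathcal H)}{p^{1+s_1+s_2}},
\]
so the correct extraction is
\[
F(s_1,s_2)=\frac{\zeta(1+s_1+s_2)^k}{\zeta(1+s_1)^k\,\zeta(1+s_2)^k}\,G(s_1,s_2),\qquad G(0,0)=\mathfrak S(\mathcal H),
\]
not its reciprocal. With your version $F$ would have separate poles at $s_1=0$ and $s_2=0$, so the left shift you describe is not even legal. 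With the correct $F$, the $\zeta(1+s_i)^{-k}$ factors contribute $s_i^{k}$, and combined with the Perron kernels the leading integrand becomes
\[
\frac{R^{s_1+s_2}}{(s_1+s_2)^{k}\,s_1^{\,l+1}s_2^{\,l+1}},
\]
i.e.\ the poles in $s_i$ have order $l+1$, not $k+l+1$; it is this total order $k+2l+2$ that yields $(\log R)^{k+2l}$, and the accompanying beta kernel is $v^{l}(1-v)^{l}$ rather than $v^{k+l}(1-v)^{k+l}$, which is exactly what delivers the factor ${2l\choose l}$. A smaller point: the argument ``$k+l$'' in $\Lambda_R(n;\mathcal H,k+l)$ is evidently a misprint for $l$ in the statement (compare \eqref{eq:2.9}--\eqref{eq:2.10}); taken literally it would force $(\log R)^{3k+2l}$ on the right-hand side, and your normalization $1/((2k+l)!)^2$ and exponent $L(d)=\log^{2k+l}(R/d)$ inherit this inconsistency.
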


\begin{lemma}
\label{lem:2}
For $h \in \mathcal H$, $R \leq N^{1/4} /(\log N)^C$, $C > C(A)$, $N \to \infty$, we have
\beq
\sum_{n \sim N} \Lambda_R(n; \mathcal H, k + l)^2 \chi_{\mathcal P}(n + h) = {2l + 2\choose l + 1} \frac{N(\log R)^{k + 2l + 1} (\mathfrak S(\mathcal H) + o(1))}{(k + 2l + 1)!\log N}.
\label{eq:2.7}
\eeq
\end{lemma}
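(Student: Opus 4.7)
The plan is to follow the standard Goldston--Pintz--Y{\i}ld{\i}r{\i}m argument, with the Bombieri--Vinogradov theorem \eqref{eq:1.3} as the crucial analytic input. First I would expand the square of $\Lambda_R$ into a double sum over squarefree $d_1, d_2 \leq R$ and interchange orders of summation. Since for squarefree $D = [d_1,d_2]$ the condition $D \mid P_{\mathcal H}(n)$ cuts out certain residue classes $n \equiv a \pmod{D}$, one obtains a weighted sum $\sum_{d_1,d_2} c(d_1,d_2)\sum_{n \sim N,\; n \equiv a\,(D)} \chi_{\mathcal P}(n+h)$; the constraint $D \leq R^2 \leq N^{1/2}/(\log N)^{2C}$ then permits replacing the inner prime count by $li\, N/\varphi(D)$ (times the number of admissible residues) with total error $O_A(N/(\log N)^A)$ via Bombieri--Vinogradov. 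This is exactly why the hypothesis is $R \leq N^{1/4}/(\log N)^C$ here rather than $\sqrt N/(\log N)^C$ as in Lemma~\ref{lem:1}.

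For the prime count one must drop residue classes $a$ with $(a+h,D) > 1$, since these force $n+h = p$ for some $p \mid D$, a negligible contribution. The resulting multiplicative function $f_h(d_1,d_2)$ has local factor at a prime $p$ equal to $\nu_p(\mathcal H) - 1$ rather than $\nu_p(\mathcal H)$, because one of the $\nu_p(\mathcal H)$ allowed residues is forbidden by $p \nmid n+h$ (note that $h \in \mathcal H$ forces $-h \bmod p$ to be one of the allowed classes). This single change in the Euler factors is what distinguishes the present lemma from Lemma~\ref{lem:1}.

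The remaining main term reduces to an arithmetic sum
\[
\frac{li\, N}{((k+l)!)^2}\sum_{d_1,d_2 \leq R}\frac{\mu(d_1)\mu(d_2)\log^{k+l}(R/d_1)\log^{k+l}(R/d_2)}{\varphi([d_1,d_2])}\,f_h(d_1,d_2),
\]
which I would evaluate by a standard Perron / Selberg--Delange contour computation: write each $\log^{k+l}(R/d)$ as $\frac{(k+l)!}{2\pi i}\oint R^s d^{-s} s^{-k-l-1}\,ds$, insert, and factor the resulting Dirichlet series into Euler products. The Euler product equals, modulo a factor holomorphic and nonzero at $s_1=s_2=0$, a term $\zeta(1+s_1+s_2)^{k+1}$ rather than the $\zeta(1+s_1+s_2)^k$ that appears in Lemma~\ref{lem:1}. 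Pushing both contours to the left and evaluating the residue at the origin then produces the prescribed main term with coefficient $\binom{2l+2}{l+1}/(k+2l+1)!$ together with the extra factor $(\log R)/\log N$, while $\mathfrak S(\mathcal H)$ emerges from the holomorphic part.

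The main obstacle is the bookkeeping in the contour evaluation: one must identify the Euler factors of $f_h/\varphi$ carefully and verify that the extra power of $\zeta$ is precisely one, not two, so as to obtain both the exponent shift $k+2l \to k+2l+1$ and the binomial $\binom{2l+2}{l+1}$. The error analysis is routine once $|c(d_1,d_2) f_h(d_1,d_2)|$ is controlled by a divisor-type function, since then the total error is absorbed by $\sum_{D \leq R^2}\tau_3(D)|E(N,D)| \ll_A N/(\log N)^A$ through Bombieri--Vinogradov.
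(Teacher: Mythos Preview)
The paper does not actually prove Lemma~\ref{lem:2}; it is quoted as a special case of Proposition~2 of \cite{GPY1}, and the only commentary given is the remark that the proof uses just two features of the primes: (i) Bombieri--Vinogradov, and (ii) that $n+h\in\mathcal P$ forces $n+h$ to have no prime factor below~$R$, so that $P_{\mathcal H}(n)$ and $P_{\mathcal H\setminus\{h\}}(n)$ have the same divisors $\leq R$. Your sketch is precisely the argument of \cite{GPY1} and covers both points: the replacement of the inner prime count by $li\,N/\varphi(D)$ via \eqref{eq:1.3} is (i), and your ``drop the residue classes with $(a+h,D)>1$'' step is exactly (ii). So the approach matches.

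One correction in your bookkeeping: the exponent on $\zeta(1+s_1+s_2)$ goes \emph{down} to $k-1$, not up to $k+1$. For primes $p>\max h_i$ the number of admissible residues is $k-1$ (all of $-h_1,\dots,-h_k$ except $-h$), and the denominator is $\varphi(p)=p-1$ rather than $p$, so the local density is $(k-1)/(p-1)$ and the singular part of the Euler product is $\zeta(1+s_1+s_2)^{k-1}\zeta(1+s_1)^{-(k-1)}\zeta(1+s_2)^{-(k-1)}$. Equivalently, property (ii) reduces the calculation to the Lemma~\ref{lem:1} setup with $\mathcal H$ replaced by $\mathcal H\setminus\{h\}$ but the same weight $\log^{k+l}(R/d)/(k+l)!$; that is, $k\mapsto k-1$ and $l\mapsto l+1$, which is what produces both the exponent $k+2l+1=(k-1)+2(l+1)$ and the binomial $\binom{2l+2}{l+1}=\binom{2(l+1)}{l+1}$. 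With that sign fixed your outline is correct.
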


In the proof of Lemma~\ref{lem:2} actually just two properties of the primes are used:

(i) their distribution in residue classes is on average regular as described by the Bombieri--Vinogradov theorem;

(ii) if $n + h_0 \in \mathcal P$, $n \sim N$, then $\mathcal P_{\mathcal H}(n)$ and $\mathcal P_{\mathcal H \setminus \{ h\}}(n)$ have the same divisors below $R$, that is, $n + h_0$ has no prime divisor below~$R$.

\medskip
The first property is shared by the elements of $\mathcal P^*_{r,\ve}(N)$ as shown by \eqref{eq:2.1}, the only change being the factor $C_0(r,\ve)$.
In the cases $r = 2$ and $r = 3$ they obviously share property (ii) as well.

In such a way with the notation
\beq
\mathcal P(N) = [N, 2N) \cap \mathcal P, \quad \wt{\mathcal P}_{r,\ve}(N) = \mathcal P(N) \cup \mathcal P^*_{r,\ve}(N)
\label{eq:2.8}
\eeq
we obtain in exactly the same way as Lemma~\ref{lem:2}, for the characteristic function $\chi_{\wt{\mathcal P}}$ of the set $\wt{\mathcal P}$ the following

\begin{lemma}
\label{lem:3}
For $R \leq N^{1/4}/(\log N)^C$, $C > C(A, r, \ve)$, $r = 2$ or $3$, $N \to \infty$, we have
\begin{align}
\label{eq:2.9}
&\sum_{n \sim N} \Lambda_R(n; \mathcal H, l)^2 \chi_{\wt{\mathcal P}} (n + h_0) =\\
&= {2l + 2\choose l + 1} \frac{N(\log R)^{k + 2l + 1} \mathfrak S(\mathcal H) (1 + C_0(r, \ve) + o(1))}{(k + 2l + 1)!\log N}.
\notag
\end{align}
\end{lemma}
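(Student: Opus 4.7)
The plan is to run the proof of Lemma~\ref{lem:2} essentially verbatim, splitting the contribution of $\wt{\mathcal P}_{r,\ve}(N)$ into its two disjoint pieces $\mathcal P(N)$ and $\mathcal P^*_{r,\ve}(N)$ and treating each via its respective equidistribution theorem.

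First I would write
\[
\sum_{n\sim N}\Lambda_R(n;\mathcal H,l)^2\,\chi_{\wt{\mathcal P}}(n+h_0)=S_1+S_2,
\]
where $S_1$ carries $\chi_{\mathcal P}$ and $S_2$ carries $\chi_{\mathcal P^*_{r,\ve}}$; these sets are disjoint because every element of $\mathcal P^*_{r,\ve}(N)$ is composite. Lemma~\ref{lem:2} evaluates $S_1$ as
\[
S_1={2l+2\choose l+1}\frac{N(\log R)^{k+2l+1}(\mfS(\mathcal H)+o(1))}{(k+2l+1)!\,\log N}.
\]
For $S_2$ I would expand $\Lambda_R^2$ and interchange summation to obtain
\[
S_2=\frac{1}{((k+l)!)^2}\sum_{d_1,d_2\le R}\mu(d_1)\mu(d_2)\log^{k+l}\!\frac{R}{d_1}\log^{k+l}\!\frac{R}{d_2}\sum_{\substack{n\sim N\\ [d_1,d_2]\mid P_{\mathcal H}(n)\\ n+h_0\in\mathcal P^*_{r,\ve}}}\!\!1.
\]
Since $[d_1,d_2]\le R^2\le\sqrt{N}/\log^{2C}\!N$, the inner sum is of precisely the type handled by Theorem~\ref{th:3}, with $C_0(r,\ve)$ being the sole numerical change from the Bombieri--Vinogradov input used in Lemma~\ref{lem:2}.

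The structural prerequisite is property (ii). For $r\in\{2,3\}$ and $\ve<1/2$ every $m\in\mathcal P^*_{r,\ve}(N)$ satisfies
\[
P^-(m)\ge N^{(1-\ve/2)/r}\ge N^{(1-\ve/2)/3}>N^{1/4}\ge R,
\]
so $n+h_0\in\mathcal P^*_{r,\ve}$ forces $n+h_0$ to be coprime to every squarefree divisor of $P_{\mathcal H\setminus\{h_0\}}(n)$ of size at most $R$. This is exactly the input that lets the Goldston--Pintz--Y{\i}ld{\i}r{\i}m combinatorial identity collapse the divisor condition to one involving only $\mathcal H\setminus\{h_0\}$, after which the singular-series evaluation of \cite{GPY1} applies unchanged except for the overall factor $C_0(r,\ve)$. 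Adding $S_1$ and $S_2$ produces the announced $(1+C_0(r,\ve)+o(1))$.

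The only potential obstacle is verifying that the error savings of Theorem~\ref{th:3} propagate through the GPY bilinear manipulation in the same way as those of Bombieri--Vinogradov; since Theorem~\ref{th:3} supplies an arbitrary power-of-log saving over an identical $q$-range, this is automatic. The argument genuinely fails for $r\ge 4$: then $(1-\ve/2)/r<1/4$, so $\mathcal P^*_{r,\ve}(N)$ may contain integers with prime factors below~$R$, and property~(ii) --- together with the clean bilinear estimate --- is lost.
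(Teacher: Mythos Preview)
Your approach is exactly the paper's: it verifies properties (i) and (ii) for $\mathcal P^*_{r,\ve}(N)$ via Theorem~\ref{th:3} and the inequality $P^-(m)\ge N^{(1-\ve/2)/r}>N^{1/4}\ge R$, then invokes the proof of Lemma~\ref{lem:2} verbatim with the extra factor $C_0(r,\ve)$. Your decomposition $S_1+S_2$ and the observation about why $r\ge 4$ fails simply make explicit what the paper compresses into the phrase ``in exactly the same way as Lemma~\ref{lem:2}.''
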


In this case we have, similarly  to (3.3) of \cite{GPY1},
\begin{align}
\label{eq:2.10}
S :&= \sum_{n \sim N} \biggl(\sum^k_{i = 1} \chi_{\wt{\mathcal P}}(n + h_i) - 1\biggr) \Lambda_R(n; \mathcal H, l)^2 \sim\\
&\sim {2l \choose l} \frac{N(\log R)^{k + 2l} \mathfrak S(\mathcal H)}{(k + 2l)!} \left(\frac{k}{k + 2l + 1} \cdot \frac{2l + 1}{2l + 2} (1 + C_0(r, \ve)) - 1 \right) > 0 \notag
\end{align}
if we choose $l = \left\lfloor \sqrt{k}/2\right\rfloor$, $k > k_0(r, \ve)$, which proves Theorem~\ref{th:2}, consequently also Theorem~\ref{th:1} for $r = 2,3$.

\bigskip

\noindent
{\small J\'anos {\sc Pintz}\\
R\'enyi Mathematical Institute of the Hungarian Academy
of Sciences\\
Budapest\\
Re\'altanoda u. 13--15\\
H-1053 Hungary\\
E-mail: pintz@renyi.hu}

\end{document}